\newcommand{\R}{\mathbb{R}}
\newcommand{\Z}{\mathbb{Z}}  
\newcommand\hz[1]{#1\hphantom{0}}
\newcommand\HZ{\hphantom{0}}
\newcommand{\F}[3]{f(\mathcal{#1}^{#2}_{#3})}
\newcommand{\Fl}[3]{f\hspace{-1pt}\ell(\mathcal{#1}^{#2}_{#3})}
\newcommand{\SP}[3]{\mathcal{#1}^{#2}_{#3}}
\newcommand{\pos}{\operatorname{pos}}
\theoremstyle{plain}
\newtheorem{theorem}{Theorem}[section]
\newtheorem{prop}[theorem]{Proposition}
\newtheorem{conjecture*}{Conjecture}
\theoremstyle{definition}
\newtheorem{alg}[theorem]{Algorithm}
\theoremstyle{remark}
\title{A flag vector of a 3-sphere that is not\\ the flag vector of a 4-polytope%
\footnote{The first author was funded by DFG through the RTG \emph{Methods for Discrete Structures}. 
Research by the second author was supported by the DFG Collaborative Research Center TRR~109 ``Discretization in Geometry and Dynamics''.}}
\author{Philip Brinkmann\\ 
Institut f\"ur Mathematik, FU Berlin\\Arnimallee 2\\14195 Berlin, Germany\\
\url{philip.brinkmann@fu-berlin.de}
\and
G\"unter M.~Ziegler\\
Institut f\"ur Mathematik, FU Berlin\\Arnimallee 2\\14195 Berlin, Germany\\
\url{ziegler@math.fu-berlin.de}}
\date{{\small November 19, 2015}}
\begin{document}
    
    \maketitle
    
\begin{abstract}
        We present a first example of a flag vector of a polyhedral sphere
        that is not the flag vector of any polytope. 
        Namely, there is a unique
        $3$-sphere with the parameters $(f_0,f_1,f_2,f_3;f_{02})=(12,40,40,12;120)$,
        but this sphere is not realizable by a convex $4$-polytope.
		
        The $3$-sphere, which is $2$-simple and $2$-simplicial, 
        was found by Werner (2009); we present results of a computer enumeration which
        imply that the sphere with these parameters is unique. 
        We prove that it is non-polytopal in two ways: First, we show that it has no oriented matroid,
        and thus it is not realizable; this proof was found by computer, but can be verified by hand.
        The second proof is again a computer-based oriented matroid proof and shows that for exactly
        one of the facets this sphere does not even have a diagram based on this facet. 
		Using the non-polytopality, we finally prove that the sphere
		is not even embeddable as a polytopal complex.
\end{abstract}

\section{Introduction} 

A lot of work has gone into the characterization of the sets of all $f$-vectors $(f_0,f_1,\dots,f_{d-1})$ of convex $d$-polytopes, which we
denote by $\F{P}{d}{}$. Already in 1906, Steinitz \cite{Steinitz} characterized the $f$-vectors of $3$-polytopes as
\[
\F{P}{3}{}=\{(f_0,f_1,f_2)\in\Z^3: f_0-f_1+f_2=2,\ f_2\le 2f_0-4,\ f_0\le 2f_2-4\}.
\]
Grünbaum \cite[p.~131]{Gruenbaum} showed that the affine hull of $\F{P}{d}{}$ is a hyperplane in~$\R^d$, determined by
the Euler equation.
Moreover, the $f$-vectors of the simplicial polytopes, which we denote by $\F{P}{d}{s}$, were characterized completely
by McMullen's “$g$-conjecture” \cite{McMullenGConj}, 
as proven by Billera \& Lee \cite{BilleraLeeGTHM2} and Stanley~\cite{Sta3}.

The flag vector of a polytope, introduced by Bayer \& Billera \cite{BaBi} as the
“extended $f$-vector,” in general contains considerably more combinatorial information than the $f$-vector.
For example, the dimension of the affine hull of the set of 
flag vectors of $d$-polytopes, which we denote by  $\Fl{P}{d}{}$, is a Fibonacci number minus one \cite{BaBi}, and thus grows
exponentially with $d$.
On the other hand, within some important classes of polytopes the $f$-vector determines the flag vector;
for example, this holds for $3$-polytopes, as well as for simplicial polytopes (and thus for simple polytopes,
by duality).
Thus the results just quoted also characterize the flag vector sets
$\Fl{P}{3}{}$ and $\Fl{P}{d}{s}$.

While the $f$-vectors and the flag vectors of $3$-polytopes are thus completely understood,
for $4$-dimensional polytopes our knowledge about 
the sets of $f$-vectors and of flag vectors is rather incomplete.
The $2$-dimensional coordinate projections of the $3$-dimensional set $\F{P}{4}{}$ have been determined completely, 
see \cite[Sect.~10.4 and p.~198c]{Gruenbaum}.
A systematic study of the $4$-dimensional set $\Fl{P}{4}{}$ was started by Bayer \cite{bayer}; see 
also \cite{HoeppnerZie} and \cite{Ziegler03,ZieglerUtah}.
But there are still open questions, e.g.~those related to the \emph{fatness}. While there are $3$-spheres of arbitrarily large fatness, 
this is not known for $4$-polytopes \cite{ZieglerUtah}. On the other hand, there is a lower bound for the fatness of $4$-polytopes, 
but we do not know whether this also holds for $3$-spheres~\cite{ZieglerUtah}.

A $d$-\emph{sphere} is a regular CW $d$-sphere with the intersection property (i.e.~any two faces intersect
in a common face). We call a $d$-sphere \emph{polyhedral} if in addition it is PL and all facets are 
polytopal. However, all $3$-spheres are PL, since all $3$-manifolds are \cite[Sec.~36]{Moise77}, and all
facets ($2$-spheres) are polytopal by Steinitz' Theorem \cite{SteinitzThm, StRa},
as their graphs are planar and $2$-connected for any regular CW complex, and the
intersection property then implies that they are $3$-connected. Hence, all
$3$-spheres are polyhedral and we will simply refer to them as $3$-spheres.

As polyhedral spheres appear very naturally in any attempt to enumerate combinatorial types
of polytopes (see e.g.\ Grünbaum \cite[Sects.~3.3, 5.5]{Gruenbaum},
Bokowski \& Sturmfels \cite{BokowskiStu}), one must ask whether
the characterizations of sets of $f$- resp.\ flag vectors extend to polyhedral spheres,
that is, whether 
\begin{equation}\label{conj:flag}  
      \F{S}{d-1}{}\ =\ \F{P}{d}{} \qquad\text{and}\qquad
      \Fl{S}{d-1}{}\ =\ \Fl{P}{d}{}\ ?  \tag{$*$} 
\end{equation}

Steinitz's theorem \cite{SteinitzThm} \cite{StRa} in essence proves that $\SP{S}{2}{}=\SP{P}{3}{}$.
The question whether $\F{S}{d-1}{s}=\F{P}{d}{s}$, or equivalently $\Fl{S}{d-1}{s}=\Fl{P}{d}{s}$,
amounts to the “$g$-conjecture for spheres”: McMullen's conjectured answer from \cite{McMullenGConj} is “yes”;
this is known to hold for $d\le5$, as a consequence of the lower bound theorem for spheres,
proven by Barnette \cite{Barnette71}, and the sufficiency part of the $g$-Theorem for polytopes, due to Billera \& Lee \cite{BilleraLeeGTHM2}.

Up to now all available evidence with respect to  the question (\ref{conj:flag}) was positive.
There are many simplicial $3$-spheres (but also non-simplicial ones) that are non-polytopal, that is, 
not combinatorially equivalent to the boundary complex of a $4$-polytope, so
$\SP{P}{4}{s}\subsetneqq\SP{S}{3}{s}$, and consequently 
$\SP{P}{d}{s}\subsetneqq\SP{S}{d-1}{s}$ for all $d\ge4$.
Indeed, 
\emph{most} $(d-1)$-spheres are non-polytopal for $d\ge4$, as can be seen by 
comparison of Goodman \& Pollack's upper bounds on the numbers of combinatorial types of polytopes \cite{GoPo2} with the
lower bounds for spheres by 
Kalai \cite{Ka9} and Pfeifle \& Ziegler \cite{PfeifleZiegler}.
However, all the non-polytopal $3$-spheres studied so far 
turned out to have an $f$-vector (and even flag vector) that is also the $f$- (resp.~flag) vector of some $4$-polytope:
This was observed repeatedly, from the first examples 
(such as the Brückner and Barnette spheres, see e.g.\ Grünbaum \cite[Sect.~11.5]{Gruenbaum} and Ewald \cite[Sect.~III.4]{Ewal})
to the systematic enumerations of spheres with few vertices by Altshuler et al.\ (see e.g.\ 
\cite{Spheres8v2} as well as \cite[p.~96b]{Gruenbaum}).

Here we establish, for the first time, that $\Fl{S}{d-1}{}=\Fl{P}{d}{}$ does not hold in general:
Indeed, this fails for $d=4$.
For this we exhibit a specific flag vector in $\Fl{S}{3}{}\setminus\Fl{P}{4}{}$.

This flag vector belongs to a $2$-simple $2$-simplicial $3$-sphere (\emph{2s2s sphere}, for short):
A $4$-polytope or $3$-sphere is \emph{$2$-simple} if every edge lies in exactly three facets,
and \emph{$2$-simplicial} if every $2$-face has exactly three vertices.
The 2s2s property is closed under duality. 
The 2s2s $4$-polytopes were introduced by Grünbaum \cite[Sect.~4.5]{Gruenbaum}. 
This fascinating class of polytopes includes the hypersimplex and the 24-cell.
It appears naturally in the study of $\Fl{P}{4}{}$ 
(see Bayer \cite{bayer}, Ziegler \cite{Ziegler03} \cite{ZieglerUtah}, and Paffenholz \& Werner \cite{PaffenholzWerner}). 
The flag vectors of 2s2s $3$-spheres/\allowbreak $4$-polytopes are of the form
\[
(f_0,f_1,f_2,f_3;f_{02}) \ \ =\ \ (n,m,m,n;3m), 
\]
and any $3$-sphere/$4$-polytope with such a flag vector is 2s2s.
In particular, within the class 2s2s the $f$-vector determines the flag vector.
(Here and in the following we only list part of the full flag vector: All other components are determined by
the generalized Dehn--Sommerville equations of Bayer \& Billera \cite{BaBi}.)
However, there are 2s2s and non-2s2s polytopes which have the same $f$-vector:
See Paffenholz \& Ziegler \cite[Cor.~4.3]{PaffenholzZie} for examples. 
Our main result is that there is a 
specific 2s2s $3$-sphere that is non-polytopal and at the same time unique for its flag vector.
 
\begin{theorem}\label{thm:main}
    There is a unique polyhedral $3$-sphere, but no convex $4$-polytope, with flag vector given by
    \[ 
      (f_0,f_1,f_2,f_3;f_{02}) = (12,40,40,12;120).
    \]
    Thus, the set of flag vectors of $4$-polytopes is a proper subset of the
    set of flag vectors of $3$-spheres:
    \[
       \Fl{P}{4}{}\subsetneqq \Fl{S}{3}{}.
    \]
\end{theorem}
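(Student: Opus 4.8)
The plan is to prove three things in sequence --- that the claimed $3$-sphere exists, that it is the \emph{unique} $3$-sphere with the given flag vector, and that it is not the boundary complex of any convex $4$-polytope --- and then to read off the stated proper inclusion.

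\emph{Existence and uniqueness.} First I would write the 2s2s $3$-sphere $S$ down explicitly, as the list of its twelve facets on the vertex set $\{1,\dots,12\}$. Since $S$ is $2$-simplicial, each facet is a simplicial $3$-polytope and the $2$-faces are exactly the triangles shared by pairs of facets; since $S$ is $2$-simple, every edge lies in exactly three facets; so one can verify directly that $(f_0,f_1,f_2,f_3;f_{02})=(12,40,40,12;120)$. Uniqueness I would obtain from an exhaustive computer enumeration. As noted above, any $3$-sphere with this flag vector is automatically 2s2s, so it suffices to enumerate 2s2s $3$-spheres with these parameters; passing to the dual (again 2s2s, with twelve simplicial facets and forty triangular $2$-faces) one can grow candidate complexes facet by facet in a backtracking search, at each stage enforcing the pseudomanifold condition (each triangle in at most two facets), $2$-simplicity (each edge in at most three facets), the intersection property, and the prescribed face numbers. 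Each complete candidate is then tested for actually being a $3$-sphere (a strongly connected closed pseudomanifold of the right Euler characteristic, certified, say, by shellability or by checking that all vertex links are $2$-spheres and the complex is simply connected). The search should terminate with exactly one combinatorial type, Werner's sphere $S$.

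\emph{Non-polytopality.} This is the decisive --- and hardest --- step, and I would do it via oriented matroids. Suppose $S=\partial P$ for a convex $4$-polytope $P\subset\R^4$; homogenizing its twelve vertices yields a rank-$5$ chirotope $\chi$ on twelve elements. The face lattice of $P$, which coincides with that of $S$, forces many sign conditions on $\chi$: each facet determines a hyperplane containing four or more vertices, so for every affinely independent quadruple $v_{i_1},\dots,v_{i_4}$ among them the bracket $[v_{i_1}v_{i_2}v_{i_3}v_{i_4}\,w]$ vanishes for $w$ on the facet and has a fixed nonzero sign for all $w$ off it; an orientation of $S$ as a pseudomanifold makes these choices globally consistent; and the triangular $2$-faces and the edge figures yield further rank and sign constraints. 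I would then propagate the forced signs through the three-term Grassmann--Plücker relations $[Aab][Acd]-[Aac][Abd]+[Aad][Abc]=0$ until some bracket, or some product of brackets, is forced to be simultaneously positive and negative. Such a biquadratic final polynomial contradicts the existence of $\chi$, and hence shows that $S$ admits no oriented matroid at all, so in particular is not realizable. The combinatorics of this propagation are finite but delicate; the real obstacle is to find a chain of Grassmann--Plücker deductions short enough to be verified by hand, for which a computer search is the natural tool. A second, and for one distinguished facet $F$ formally stronger, route is to run the analogous rank-$4$ argument on the ball $S\setminus\{F\}$ sitting inside the $3$-polytope $F$, showing that $S$ has no polytopal diagram based on $F$; since a Schlegel diagram of a $4$-polytope is such a diagram, this again precludes polytopality.

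\emph{Conclusion.} The boundary complex of every convex $4$-polytope is a $3$-sphere, so $\Fl{P}{4}{}\subseteq\Fl{S}{3}{}$, and $(12,40,40,12;120)\in\Fl{S}{3}{}$ by the first step. But by uniqueness the only $3$-sphere with this flag vector is $S$, and $S$ is non-polytopal, so no $4$-polytope has this flag vector: thus $(12,40,40,12;120)\notin\Fl{P}{4}{}$ and the inclusion $\Fl{P}{4}{}\subseteq\Fl{S}{3}{}$ is strict, as claimed. The further statement announced in the abstract, that $S$ is not even realizable as a polytopal complex, would follow by extracting from any such realization enough of the same determinant signs --- or, localizing near $F$, a diagram based on $F$ --- to contradict the non-realizability just established; but this refinement is not needed for the theorem.
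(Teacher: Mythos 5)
Your proposal reproduces the paper's strategy essentially verbatim and has no genuine gap: exhibit Werner's sphere, establish uniqueness by an exhaustive computer enumeration of 2s2s $3$-spheres on at most twelve vertices (which the paper carries out directly on facet lists with intersection conditions (I1)--(I3) and an Eulerian-lattice check, rather than in the dual as you suggest---but for a self-dual sphere this is cosmetic), and rule out polytopality via oriented matroids by propagating forced chirotope signs through the three-term Grassmann--Pl\"ucker relations to a contradiction. The paper's chain of hand-verifiable deductions in Section~\ref{sec:non-polytopal} is exactly this sign propagation, and the diagram/biquadratic-final-polynomial route you describe is what the paper offers in Section~\ref{sec:diagrams} as a second, independent proof of non-polytopality.
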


\begin{proof}
    Any $3$-sphere with the given flag vector is necessarily $2$-simplicial,
    as $f_{02}=3f_2$, and it is $2$-simple as $f_{13}=f_{02}=3f_1$.
    A $3$-sphere $W_{12}^{40}$ 
    with this flag vector was discovered in 2009 by Werner \cite[Table 7.1 left]{WernerThesis}
    in the course of his partial enumeration of shellable 2s2s $3$-spheres with $12$ vertices.
    In Section~\ref{sec:enumerate} we report about a complete enumeration
    of 2s2s $3$-spheres with at most $12$ vertices, which yields that 
    Werner's $3$-sphere $W_{12}^{40}$ is the only sphere with this flag vector.
    Finally, in Section~\ref{sec:non-polytopal} we prove that this sphere
    does not have an oriented matroid, so in particular it is not polytopal.    
\end{proof}
We will present an alternative proof of the non-polytopality of $W_{12}^{40}$ in Section \ref{sec:diagrams}. 
In Proposition \ref{prop:diagrams} we show that the sphere does not have a diagram based on the facet $F_{12}$, 
which implies that $W_{12}^{40}$ is non-polytopal.
Based on the non-polytopality, we show in Section \ref{sec:embeddability} that the sphere $W_{12}^{40}$ 
cannot be realized as a polytopal complex in any~$\R^n$.
\smallskip

We conjecture that also $\F{P}{4}{}\subsetneqq \F{S}{3}{}$, but we have not proved that.
In particular, we know of no $4$-polytope with the $f$-vector $(f_0,f_1,f_2,f_3)=(12,40,40,12)$.
As Marge Bayer has pointed out to us,  
any $3$-sphere with this $f$-vector would be ``close'' to being 2s2s, as it must satisfy
$120=3f_2\leq f_{02}\leq 130$ by \cite[Thm.~2 (3)]{bayer}. 

We would also assume that $\Fl{P}{d}{}\subsetneqq \Fl{S}{d-1}{}$
holds for all $d>4$, but again this does not seem to follow immediately from our results.

\section{2s2s 3-spheres with few vertices}\label{sec:enumerate}

The goal of this section is to report about the proof of the following result,
which includes the uniqueness claim in Theorem~\ref{thm:main}.

\begin{theorem}\label{thm:2s2s_spheres} 
        The following is a complete list of combinatorial types of 
        $2$-simple $2$-simplicial $3$-spheres with at most $12$ vertices.%
        \\[2mm] 
{\rm%
\begin{tabular}{| c | l | l | l | l |}%
\hline
$\#$vert.{}  & name & flag vector & reference & realization/polytope\\ \hline
 $5$ & $\Delta_5$      & $(5,10,10,5;30)$   &                                 & simplex\\ \hline
 $9$ & $W_9$           & $(9,26,26,9;78)$   & \cite{WernerThesis} & \cite[Thm.~4.2.2]{WernerThesis}          \\ \hline
$10$ & $W_{10}$       & $(10,30,30,10;90)$ & \cite[Sect.~4.1]{PaffenholzWerner} 
                                            &   \cite[Sect.~4.1]{PaffenholzWerner}\\
     & $\Delta_4(2)$   & $(10,30,30,10;90)$ & \cite[p.~65]{Gruenbaum} & hypersimplex \\
     & $\Delta_4(2)^*$ & $(10,30,30,10;90)$ &                 & dual of $\Delta_4(2)$\\ \hline
$11$ & $P_{11}$ & $(11,34,34,11;102)$ & \cite[Sect.~4.1]{PaffenholzWerner} 
                                           &   \cite[Sect.~4.1]{PaffenholzWerner}\\ \hline
$12$ & $W_{12}^{39}$ & $(12,39,39,12;117)$ & \cite[Tbl.~7.1 right]{WernerThesis}
                                           &   \cite[Sect.~4.2]{MiyataThesis}\\ 
     & $W_{12}^{40}$ & $(12,40,40,12;120)$ & \cite[Tbl.~7.1 left]{WernerThesis}
                                           & none: see Sect.~\ref{sec:non-polytopal} \\ \hline 
\end{tabular} 
} 
\\[1.5mm] 
All of these, except for the hypersimplex and its dual, are self-dual. 
\end{theorem}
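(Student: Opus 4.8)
The statement is a finite classification of combinatorial types, so the plan is a computer-assisted complete enumeration in two stages: first cut the problem down to finitely many admissible flag vectors, and then, for each of those, list all combinatorial types.

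For the first stage I would exploit the rigid local structure forced by the 2s2s property. Since every $2$-face of a 2s2s $3$-sphere $W$ is a triangle, every facet of $W$ is a \emph{simplicial} $3$-polytope, hence has between $4$ and $f_0$ vertices; and since $W$ is $2$-simple, the graph of every vertex figure of $W$ is cubic, so by Steinitz every vertex figure is a \emph{simple} $3$-polytope, whence every vertex degree $\deg_W(v)$ is \emph{even} and at least $4$ (and trivially at most $f_0-1$). Summing over the vertices gives $2f_0\le f_1\le\binom{f_0}{2}$, and since the flag vector of a 2s2s $3$-sphere has the form $(n,m,m,n;3m)$ (see the Introduction) this leaves, for each $n=f_0\le 12$, only a short and explicitly computable list of candidate values of $m$. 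For the smallest $n$ these can even be ruled out by hand --- e.g.\ with $n=6$ all vertex degrees must equal $4$, which by a count of vertex--facet incidences forces all $6$ facets to be tetrahedra, that is, a simplicial $3$-sphere with $6$ vertices and only $6$ facets, contradicting the Lower Bound Theorem \cite{Barnette71} --- while for larger $n$ I would rely on the enumeration of the second stage.

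For the second stage the plan is, for each surviving flag vector, to enumerate all 2s2s $3$-spheres by \emph{orderly generation}. Starting from a single facet I would build up a pure $3$-dimensional polyhedral complex facet by facet: at each step maintain the set of free ridges (triangles currently lying in only one facet), pick a canonically determined free ridge, and branch over the possible new simplicial $3$-polytopes containing that ridge and over the identifications of their remaining vertices with existing or with new vertices, enforcing throughout the intersection property, the bound of $n$ on the number of vertices, and the $2$-simple condition that no edge come to lie in more than three facets. A branch is cut as soon as one of these necessary conditions fails or the partial complex provably cannot be completed within the target flag vector, and it is recorded when no free ridge remains and the resulting closed complex is a polyhedral $3$-sphere (all $3$-manifolds being PL, this last test is effective). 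Isomorphic outputs are discarded by a canonical form of the vertex--facet incidence structure (e.g.\ via \texttt{nauty}), and --- crucial for feasibility --- most isomorphs are never produced, because one only ever extends along a canonically chosen ridge. Run over all admissible flag vectors with $f_0\le 12$, this should output exactly the seven types listed in Theorem~\ref{thm:2s2s_spheres}. It then remains to check directly that each is a $2$-simple $2$-simplicial $3$-sphere, to verify that the cases $f_0\le 8$ reproduce the classical enumerations (Altshuler et al.\ \cite{Spheres8v2}) and that the $12$-vertex cases contain Werner's spheres \cite{WernerThesis}, and to compare each type with its dual --- which gives the closing assertion that all of them except the hypersimplex and its dual are self-dual; the realizations cited in the table certify the polytopality column.

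The main obstacle is not any single step but making the enumeration simultaneously \emph{exhaustive} and \emph{feasible}: every pruning rule must be proved never to discard a genuine 2s2s $3$-sphere, the a priori bounds on facet sizes and vertex degrees must actually terminate the recursion, and the correctness of the implementation must be guarded against by the independent cross-checks above. The search stays tractable precisely because the 2s2s constraints are so severe --- facets are small simplicial $3$-polytopes and all vertex degrees are small and even --- but turning this observation into a rigorous, reproducible computation is where the real work lies.
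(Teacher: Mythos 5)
Your proposal takes the same overall strategy as the paper --- a finite, computer-assisted exhaustive enumeration made feasible by the tight local constraints that the 2s2s property imposes --- but the algorithmic details differ in interesting ways. The paper organizes its search by \emph{$p$-vector} (the distribution $(p_4,p_5,\dots)$ of facet sizes), derives the sharp bound $p_i=0$ for $2i-4\ge n$ from the observation that a simplicial $3$-polytope facet with $i$ vertices has exactly $2i-4$ neighbours (giving $i\le 7$ when $n=12$, rather than your $i\le f_0$), and uses Bayer's inequality $m\le\tfrac14 n(n+3)$, which is considerably tighter than your $\binom{n}{2}$; for each admissible $p$-vector it then enumerates set systems of facet vertex sets satisfying three intersection conditions (I1)--(I3) with symmetry breaking anchored on a largest facet, and finally filters for connected Eulerian lattices of rank $5$. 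Your proposal instead works on the dual side, exploiting that vertex figures are simple $3$-polytopes and hence all vertex degrees are even and $\ge 4$ (a fact the paper does not explicitly use but which is correct), and builds the complex by orderly generation along canonically chosen free ridges with isomorph rejection via canonical forms. Both approaches are sound: yours has the advantage of maintaining a genuine polyhedral complex at every step (so a completed object is automatically a sphere, rather than merely an Eulerian lattice that must be checked afterwards), while the paper's approach buys much tighter a priori bounds on facet sizes and $m$, which in practice is what brings the search down to about two weeks on 45 cores. If you implemented your scheme as described, the weaker bounds on facet sizes and on $m$ would substantially inflate the search tree; to make it competitive you would want to add the $2i-4<n$ facet-size bound and the $m\le\tfrac14 n(n+3)$ bound as pruning rules, both of which are easily justified. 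With those refinements your route is a legitimate alternative proof of the classification.
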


This improves upon results of Werner \cite{WernerThesis}, who
classified the 2s2s $3$-polytopes with $f_0\le9$ vertices \cite[Thm.~7.2.13]{WernerThesis}.
He also performed a computer enumeration that produced all \emph{shellable} 2s2s 3-spheres with $f_0\le11$.
For $f_0=12$ Werner's computations remained incomplete due to constraints in computing power;
however, his incomplete enumeration produced the two spheres mentioned above.
One of these spheres, $W_{12}^{39}$, was realized as a $4$-polytope by Miyata \cite{MiyataThesis}. 

\subsection{The enumeration algorithm} 
For a $3$-sphere $S$ we define the $p$-\emph{vector} $p(S)=(p_4,p_5,\ldots)$, where $p_i$ is the number of facets of $S$ with $i$ vertices. 
For any 2s2s $3$-sphere with $f$-vector $(n,m,m,n)$ we have $p_i=0$ for $2i-4\ge n$, since a facet with $i$ vertices
is a simplicial $3$-polytope with $2i-4$ faces and thus has $2i-4$ neighboring facets. 
In particular, for $n=12$ we have $p_i = 0$ for $i > 7$.
Moreover, we have $\sum_{i\ge4} p_i=n$, and $\sum_{i\ge4}(2i-4)p_i=2m$. This yields a finite list of possible $p$-vectors for any
possible $f$-vector.
For example, for $f=(12,40,40,12)$ there are exactly $23$ potential $p$-vectors that satisfy the three restrictions. 
To enumerate all 2s2s $3$-spheres with a given number $n$ of vertices, note that
\begin{equation}
2n\le m\le \tfrac14n(n+3). \tag{M1}
\end{equation}
While the lower bound is trivial, the upper bound stems from \cite[Thm.~2 (3)]{bayer}.

We have designed and implemented an enumeration algorithm  
in order to produce, for each $p$-vector, one symmetry representative
of each set system (of vertex sets of facets) that has the given $p$-vector and is \emph{proper} in the sense
that it satisfies
\begin{compactenum}[({I}1)]
    \item the intersection of two facets contains either $0$, $1$ or $3$ vertices,\label{enum1}
    \item the intersection of three facets contains at most $2$ vertices, and\label{enum2}
    \item the intersection of four facets contains at most $1$ vertex.\label{enum3}
\end{compactenum}   
This is where we crucially use the fact that we are looking for 2s2s $3$-spheres only.
The resulting lists are then checked for being Eulerian lattices of rank $5$. 

The idea for symmetry breaking in the enumeration, and thus for avoiding to produce re-labelled versions of the same facet lists too often, 
was to fix the labelling of the vertex set of a facet of maximal size $i$, 
and then to assign step by step vertex labels to a remaining facet of maximal size. 
It turned out that in some cases even more facets could be fixed, or at least had up to re-labelling only few distinct possibilities. 
In particular this was the case whenever $p_7>0$.
 
\begin{alg}
find\_facet\_lists$(p)$\\
\textup{INPUT:} $p$-vector $(p_4,p_5,\ldots)$\\
\textup{OUTPUT:} the facet lists of all 2s2s rank $5$ Eulerian lattices with this $p$-vector up to combinatorial equivalence
\begin{compactenum}[ \rm(1)\ ]
\item  ind $= \max\{i : p_i>0\}$ 
\item  facet\_list $= \{\{0,\ldots,\textup{ind}-1\}\}$ 
\item  $p_{ind} = p_{ind}-1$ 
\item  ind $= \max\{i : p_i>0\}$ 
\item  stc $= \{\{i_0,\ldots ,i_{ind}\}: \textrm{intersection with facet\_list is proper}\}$ 
\item  for\; $F\,\in $ stc: 
\item  \hspace*{0.75cm}facet\_list $=$ facet\_list $\cup\,\{F\}$ 
\item  \hspace*{0.75cm}\textup{recursively add new facets to the list} 
\item  \hspace*{0.75cm}\textup{evaluate whenever there are enough facets in the list}
\item  \hspace*{0.55cm}facet\_list $=$ facet\_list $\setminus\,\{F\}$
\end{compactenum}
\end{alg}

After roughly two weeks of computation on standard linux workstations with altogether 45 kernels,
the algorithm had enumerated all connected 2s2s rank $5$ Eulerian lattices with up to $12$ vertices. 
This produced exactly the face lattices of the spheres listed in Theorem~\ref{thm:2s2s_spheres}, and thus proves that theorem as well as the second part of Theorem~\ref{thm:main}.

We refer to the works by Paffenholz and Werner \cite{Paffenholz2s2s}  \cite{PaffenholzWerner} \cite{WernerThesis} for information and data on 2s2s $4$-polytopes with more than $12$ vertices.

\section{Non-polytopality}\label{sec:non-polytopal}

Our approach to prove non-polytopality is via oriented matroids. 
This is a standard method for proving the non-realizability of polytopes as well as of polyhedral surfaces
(see for example Bokowski \& Sturmfels \cite{BokowskiStu}, Bokowski \cite{Boko}, Björner et al.\ \cite[Chap.~8]{orientedMatroids}),
but as far as we know this has always been applied to simplicial polytopes or surfaces, and thus in a setting of uniform oriented matroids,
with the notable exception of Bremner's software package \texttt{mpc} \cite{Bremner-mpc}, see Bokowski, Bremner \& Gévay~\cite[Sect.~7]{BBG-OMgeneration}.
(Indeed, David Bremner has confirmed the non-existence result of this section using the \texttt{nuoms} function of his package.)
Here we demonstrate that the oriented matroid method is particularly effective in an example with a non-simplicial sphere, and hence for a non-uniform oriented matroid. 

The basic approach is as follows: Any set of points $v_0,\dots,v_N\in\R^d$ leads to an orientation function
$\chi:\{v_0,v_1,\dots,v_N\}^{d+1}\rightarrow\{0,+1,-1\}$ 
by setting
\[
    \chi(v_{i_0},v_{i_1},\dots,v_{i_d})\ :=\ \textrm{sign} \det
    \begin{pmatrix} v_{i_0} & v_{i_1} & \cdots &v_{i_d} \\ 1 & 1 & \cdots & 1 \end{pmatrix}.
\]
This map is a \emph{chirotope of rank} $d+1$. In addition to the condition that its support has to be a matroid 
(which we do not use; cf.~\cite[Thm.~3.6.2]{orientedMatroids}),
this means that 
\begin{compactenum}[ (C1)] 
\item it is alternating, and 
\item it satisfies the \emph{three term Grassmann--Pl\"ucker relations}:
For any $d-1$ points $\lambda=(v_{i_0},\dots,v_{i_{d-2}})$ and four points $v_a,v_b,v_c,v_d$ the set
\[\big\{\,\chi(\lambda,v_a,v_b)\cdot\chi(\lambda,v_c,v_d),\ 
         -\chi(\lambda,v_a,v_c)\cdot\chi(\lambda,v_b,v_d),\ 
          \chi(\lambda,v_a,v_d)\cdot\chi(\lambda,v_b,v_c)\,\big\}
\]
either equals $\{0\}$ or contains $\{-1,+1\}$.
\end{compactenum}
If the points $v_0,\dots,v_N$ are supposed to be the vertices of a $d$-dimensional polytope
with a prescribed facet list $(F_1,\dots,F_n)$,
then the map must satisfy the following extra conditions: 
\begin{compactenum}[ (P1)]
\item If $v_{i_0},\dots,v_{i_d}$ are contained in a facet $F_j$, then $\chi(v_{i_0},\dots,v_{i_d})=0$.
\item \label{facet-condition}
      If $v_{i_1},\dots,v_{i_d}$ are contained in a facet $F_j$ which does not contain $v_a$ or $v_b$, then
\[ \chi(v_a,v_{i_1},\dots,v_{i_d}) = \chi(v_b,v_{i_1},\dots,v_{i_d}).
\]
\end{compactenum}

\begin{proof}[Proof of Theorem~\ref{thm:main} (the non-polytopality part)]
Werner's sphere $W_{12}^{40}$ with the flag vector $(f_0,f_1,f_2,f_3;f_{02}) = (12,40,40,12;120)$
is given by the following list of facets
(where we use the vertex labeling of \cite[Table 7.1, left]{WernerThesis}):
    \begin{center}
    \begin{minipage}{0.47\textwidth}
    $F_{\hz1}{:}\   \{v_{\hz0},v_{\hz1},v_{\hz2},v_{\hz3}\}$\\
    $F_{\hz2}{:}\   \{v_{\hz0},v_{\hz2},v_{\hz3},v_{\hz4},v_{\hz5},v_{\hz6},v_{\hz7}\}$\\
    $F_{\hz3}{:}\   \{v_{\hz0},v_{\hz1},v_{\hz3},v_{\hz4},v_{\hz8},v_{\hz9}\}$\\
    $F_{\hz4}{:}\   \{v_{\hz0},v_{\hz1},v_{\hz2},v_{\hz6},v_{\hz9},v_{10}  \}$\\
    $F_{\hz5}{:}\   \{v_{\hz0},v_{\hz4},v_{\hz7},v_{\hz8}\}$\\
    $F_{\hz6}{:}\   \{v_{\hz0},v_{\hz5},v_{\hz6},v_{10}  \}$
    \end{minipage}\quad
    \begin{minipage}{0.47\textwidth}
    $F_{\hz7}{:}\   \{v_{\hz0},v_{\hz5},v_{\hz7},v_{\hz8},v_{\hz9},v_{10}  \}$\\
    $F_{\hz8}{:}\   \{v_{\hz1},v_{\hz2},v_{\hz3},v_{\hz4},v_{10}  ,v_{11}  \}$\\
    $F_{\hz9}{:}\   \{v_{\hz2},v_{\hz5},v_{\hz6},v_{\hz8},v_{10}  ,v_{11}  \}$\\
    $F_{10}  {:}\   \{v_{\hz1},v_{\hz8},v_{\hz9},v_{10}  ,v_{11}  \}$\\
    $F_{11}  {:}\   \{v_{\hz1},v_{\hz4},v_{\hz5},v_{\hz7},v_{\hz8},v_{11}  \}$\\
    $F_{12}  {:}\   \{v_{\hz2},v_{\hz4},v_{\hz5},v_{11}  \}$
    \end{minipage}
    \end{center}
    
In order to prove non-polytopality of this sphere,
we will show that there is no chirotope compatible with its facet list.

In the sphere, $\{v_8, v_9, v_{10}\} = F_7 \cap F_{10}$ is the vertex set of a triangle $2$-face, 
so the vertices $\{v_7, v_8, v_9, v_{10}\} \subset F_7$ span a tetrahedron, 
while $v_2\notin F_7$. Thus in any realization, we have $\chi(v_7,v_8,v_{10},v_2,v_9) \neq 0$.
Thus we may fix an orientation of the realization by setting
$\chi(v_7,v_8,v_{10},v_2,v_9) := +1$.   
Starting with this, we obtain the implications 

\noindent
\begin{tikzpicture}[->,>=stealth',shorten >=1pt,auto,node distance=0.6cm,thick,
  main node/.style={font=\sffamily\bfseries},
  GPr node/.style={rectangle,fill=green!40,font=\sffamily\bfseries}]

  \node[GPr node]   (1) {$\chi(v_{\hz7},v_{\hz8},v_{10}  ,v_{\hz2},v_{\hz9})\ =\ +1$};
  \node[GPr node]   (2) [below of= 1] {$\chi(v_{\hz7},v_{\hz8},v_{10}  ,v_{11}  ,v_{\hz9}) \ =\ +1$};
  \node[GPr node]   (3) [below of= 2] {$\chi(v_{\hz7},v_{\hz8},v_{10}  ,v_{\hz4},v_{\hz9}) \ =\  +1$};
  \node[main node]  (4) [below of= 3] {$\chi(v_{\hz7},v_{\hz8},v_{10}  ,v_{\hz1},v_{\hz9}) \ =\  +1$};
  \node[main node]  (5) [below of= 4] {$\chi(v_{\hz8},v_{10}  ,v_{11}  ,v_{\hz2},v_{\hz9}) \ =\  -1$};
  \node[GPr node]   (6) [below of= 5] {$\chi(v_{\hz7},v_{\hz8},v_{10}  ,v_{11}  ,v_{\hz2}) \ =\  -1$};
  \node[GPr node]   (7) [below of= 6] {$\chi(v_{\hz4},v_{\hz8},v_{10}  ,v_{11}  ,v_{\hz2}) \ =\  -1$};
  \node[main node]  (8) [below of= 7] {$\chi(v_{\hz8},v_{10}  ,v_{\hz2},v_{\hz1},v_{\hz9}) \ =\  +1 $};
  \node[main node]  (9) [below of= 8] {$\chi(v_{\hz8},v_{10}  ,v_{\hz0},v_{\hz1},v_{\hz9}) \ =\  +1$};
  \node[main node] (10) [below of= 9] {$\chi(v_{\hz4},v_{10}  ,v_{\hz2},v_{\hz1},v_{\hz9}) \ =\  +1$};
  \node[GPr node]  (11) [below of=10] {$\chi(v_{\hz4},v_{\hz8},v_{10}  ,v_{\hz2},v_{\hz1}) \ =\  -1$};
  \node[main node] (12) [below of=11] {$\chi(v_{\hz8},v_{\hz7},v_{\hz0},v_{\hz1},v_{\hz9}) \ =\  +1$};
  \node[main node] (13) [below of=12] {$\chi(v_{\hz8},v_{\hz7},v_{\hz0},v_{\hz4},v_{\hz9}) \ =\  +1$};
  \node[main node] (14) [below of=13] {$\chi(v_{\hz8},v_{\hz7},v_{\hz0},v_{\hz4},v_{\hz5}) \ =\  +1$};
  \node[main node] (15) [below of=14] {$\chi(v_{\hz8},v_{\hz7},v_{\hz0},v_{\hz4},v_{11}) \ =\  +1$};
  \node[main node] (16) [below of=15] {$\chi(v_{\hz8},v_{\hz7},v_{\hz0},v_{\hz4},v_{\hz1}) \ =\  +1$};
  \node[GPr node]  (17) [below of=16] {$\chi(v_{\hz4},v_{\hz8},v_{10}  ,v_{\hz7},v_{11}) \ =\  +1$};
  \node[GPr node]  (18) [below of=17] {$\chi(v_{\hz4},v_{\hz8},v_{10}  ,v_{\hz7},v_{\hz1}) \ =\  +1$};
  \node[main node] (19) [below of=18] {$\chi(v_{11}  ,v_{\hz7},v_{\hz0},v_{\hz4},v_{\hz5}) \ =\  +1$};
  \node[main node] (20) [below of=19] {$\chi(v_{11}  ,v_{\hz7},v_{\hz2},v_{\hz4},v_{\hz5}) \ =\  +1$};
  \node[main node] (21) [below of=20] {$\chi(v_{11}  ,v_{\hz1},v_{\hz2},v_{\hz4},v_{\hz5}) \ =\  +1$};
  \node[main node] (22) [below of=21] {$\chi(v_{11}  ,v_{\hz1},v_{\hz2},v_{\hz4},v_{\hz8}) \ =\  +1$};
  \node[GPr node]  (23) [below of=22] {$\chi(v_{\hz4},v_{\hz8},v_{10}  ,v_{11}  ,v_{\hz1}) \ =\  +1$};

  \path[every node/.style={font=\sffamily\small}]
    (1.west) edge [bend right=60] (2.west)
    		 edge [bend right=60] (3.west)
    		 edge [bend right=60] node [left] {$F_7=\{v_{0},v_{5},v_{7},v_{8},v_{9},v_{10}\}$} (4.west)
    (2.east) edge [bend left=60] node [right] {$\{v_{1},v_{8},v_{9},v_{10},v_{11}\}=F_{10}$} (5.east)
    (5.east) edge [bend left=60] (6.east)
    		 edge [bend left=60] node [right] {$F_9=\{v_{2},v_{5},v_{6},v_{8},v_{10},v_{11}\}=F_9$} (7.east)
    (4.west) edge [bend right=60] (8.west)
    		 edge [bend right=60] node [left] {$F_{10}=\{v_{1},v_{8},v_{9},v_{10},v_{11}\}$} (9.west)
    (8.east) edge [bend left=60] node [right] {$\{v_{0},v_{1},v_{2},v_{6},v_{9},v_{10}\}=F_4$} (10.east)
    (10.east) edge [bend left=60] node [right] {$\{v_{1},v_{2},v_{3},v_{4},v_{10},v_{11}\}=F_8$} (11.east)
    (9.west) edge [bend right=60] node [left] {$F_3=\{v_{0},v_{1},v_{3},v_{4},v_{8},v_{9}\}$} (12.west)
    (12.west) edge [bend right=60] node [left] {$F_7=\{v_{0},v_{5},v_{7},v_{8},v_{9},v_{10}\}$} (13.west)
    (13.west) edge [bend right=60] (14.west)
    		 edge [bend right=60] node [left] {$F_5=\{v_{0},v_{4},v_{7},v_{8}\}$} (15.west)
    		 edge [bend right=60] (16.west)
    (15.east) edge [bend left] node [right, xshift=-1mm, yshift=1mm] {$\{v_{1},v_{4},v_{5},v_{7},v_{8},v_{11}\}=F_{11}$} (17.east)
    (16.west) edge [bend right=60] node [left] {$F_{11}=\{v_{1},v_{4},v_{5},v_{7},v_{8},v_{11}\}$} (18.west)
    (14.east) edge [bend left=60] node [right, yshift=-2mm] {$\{v_{0},v_{2},v_{3},v_{4},v_{5},v_{6},v_{7}\}=F_2$} (19.east)
    (19.east) edge [bend left=60] node [right] {$\{v_{1},v_{4},v_{5},v_{7},v_{8},v_{11}\}=F_{11}$} (20.east)
    (20.east) edge [bend left=60] node [right] {$\{v_{2},v_{4},v_{5},v_{11}\}=F_{12}$} (21.east)
    (21.east) edge [bend left=60] node [right] {$\{v_{1},v_{2},v_{3},v_{4},v_{10},v_{11}\}=F_8$} (22.east)
    (22.east) edge [bend left=60] node [right] {$\{v_{1},v_{4},v_{5},v_{7},v_{8},v_{11}\}=F_{11}$} (23.east)
    ;
\end{tikzpicture} 
\\[1mm] 
Note that this chain of arguments uses all facets except for $F_1$ and $F_6$;
all vertices occur except for $v_3$ and $v_6$.
 
Given the values of $\chi$ that we have obtained, the contradiction appears in the three term Grassmann--Pl{\"u}cker relations:\\
Let $\lambda_1=(v_7,v_8,v_{10})$, $a_1=v_4$, $b_1=v_{11}$, $c_1=v_2$, $d_1=v_9$,\\
and  $\lambda_2=(v_4,v_8,v_{10})$, $a_2=v_7$, $b_2=v_{11}$, $c_2=v_2$, $d_2=v_1$.\\
Then using the marked values of $\chi$ we get
\begin{eqnarray*}
  \big\{\,\chi(\lambda_1,a_1,b_1)\cdot\chi(\lambda_1,c_1,d_1), & 
       -\,\chi(\lambda_1,a_1,c_1)\cdot\chi(\lambda_1,b_1,d_1), &
          \chi(\lambda_1,a_1,d_1)\cdot\chi(\lambda_1,b_1,c_1)\,\big\}\ = \\ 
  \big\{  (-1)\cdot(+1), & -\,\chi(v_7,v_8,v_{10},v_4,v_2)\cdot(+1), & (+1)\cdot(-1)\big\};\\[1mm]
  \big\{\,\chi(\lambda_2,a_2,b_2)\cdot\chi(\lambda_2,c_2,d_2), &  
       -\,\chi(\lambda_2,a_2,c_2)\cdot\chi(\lambda_2,b_2,d_2), &       
          \chi(\lambda_2,a_2,d_2)\cdot\chi(\lambda_2,b_2,c_2)\,\big\}\ = \\ 
  \big\{  (+1)\cdot(-1), & -\,\chi(v_4,v_8,v_{10},v_7,v_2)\cdot(+1), & (+1)\cdot(-1)\big\}.
\end{eqnarray*}
Thus both sets contain $-1$, while by the alternating property of $\chi$ not both of them can contain $+1$.
Therefore, there is no chirotope and hence no oriented matroid for the sphere $W_{12}^{40}$,
so it is not polytopal.
\end{proof}

\section{Diagrams}\label{sec:diagrams}
A different approach to show non-polytopality of a sphere is to show that for some facet it does not have a diagram with this facet as base (see \cite[Sec.~III.4]{Ewal}, \cite[Sec.~3.3]{Gruenbaum}, or \cite[Lecture 5]{Ziegler} for details about diagrams). In \cite{Ewal} this approach was used to demonstrate non-polytopality of the Barnette sphere.

We have constructed diagrams for the sphere $W_{12}^{40}$ with facets $F_1,\ldots,F_{11}$ as bases. However, the facet $F_{12}$, which is also the only facet without a simple vertex, is special and does not serve as the base of a diagram. As an example, we show here the diagram with $F_2$ as base. Note that due to the general position argument, we can choose all coordinates to be integers. The rendering was done with JavaView \cite{javaview}.
    \begin{center}
    \begin{minipage}{0.47\textwidth}
    	\begin{eqnarray*}
		 v_{\hz0} & = & (\HZ74,\HZ20,\HZ23) \\[-3pt]
		 v_{\hz1} & = & (\HZ44,\HZ70,\HZ47) \\[-3pt]
		 v_{\hz2} & = & (\HZ30,\HZ60,  110) \\[-3pt]
		 v_{\hz3} & = & (\HZ28,  100,\HZ39) \\[-3pt]
		 v_{\hz4} & = & (\HZ44,  120,\HZ50) \\[-3pt]
		 v_{\hz5} & = & (\HZ91,\HZ88,  102) \\[-3pt]
		 v_{\hz6} & = & (\HZ44,\HZ40,  117) \\[-3pt]
		 v_{\hz7} & = & (  104,\HZ97,\HZ77) \\[-3pt]
		 v_{\hz8} & = & (\HZ83,\HZ76,\HZ58) \\[-3pt]
		 v_{\hz9} & = & (\HZ67,\HZ45,\HZ46) \\[-3pt]
		 v_{10}   & = & (\HZ61,\HZ44,\HZ83) \\[-3pt]
		 v_{11}   & = & (\HZ60,\HZ71,\HZ71) 
		\end{eqnarray*}
	\end{minipage}
	\begin{minipage}{0.47\textwidth}
		\includegraphics[scale=0.31]{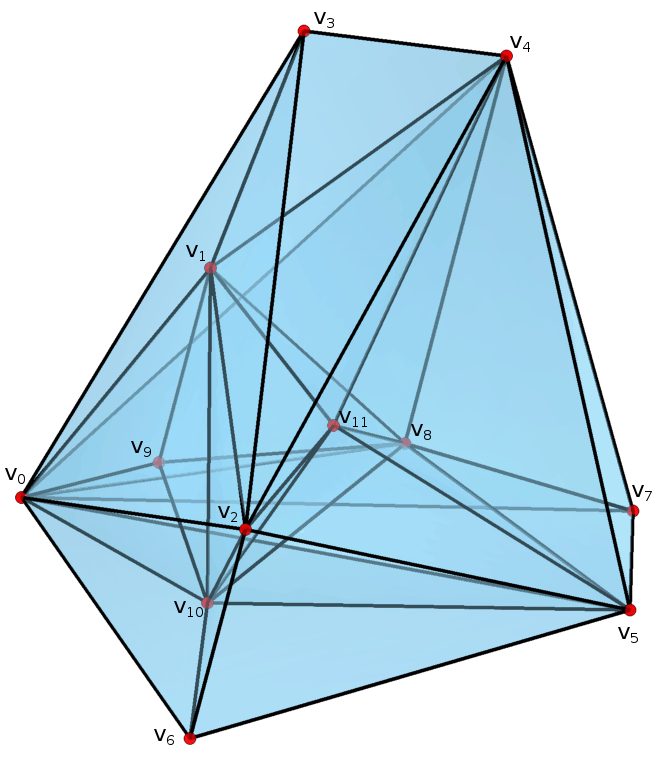}
	\end{minipage}
	\end{center}
\begin{prop}\label{prop:diagrams}
The sphere $W_{12}^{40}$ has a diagram based on every facet, but not based on $F_{12}$.
\end{prop}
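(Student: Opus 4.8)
The plan is the following. Recall that a \emph{diagram} based on a facet $F$ of $W_{12}^{40}$ is a polytopal complex in $\R^3$ that is combinatorially isomorphic to the $3$-ball $W_{12}^{40}\setminus\{F\}$ and subdivides the $3$-polytope $F$. Since every $4$-polytope has a Schlegel diagram, hence a diagram, based on each of its facets, the non-existence of a diagram based on $F_{12}$ gives a second proof that $W_{12}^{40}$ is non-polytopal; conversely, the diagrams for $F_1,\dots,F_{11}$ are then automatically non-Schlegel diagrams. The two halves of the statement have genuinely different flavours, and I would treat them separately.

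\smallskip
\noindent\emph{The eleven diagrams.} For each $j\in\{1,\dots,11\}$ I would simply exhibit explicit integer coordinates for $v_0,\dots,v_{11}\in\R^3$ realizing $W_{12}^{40}\setminus\{F_j\}$ (the case $j=2$ is displayed above) and then verify, by computing the signs of the relevant $4\times 4$ determinants, that the resulting point configuration carries exactly the prescribed face lattice and subdivides $F_j$. This verification is finite and mechanical; a generic rational perturbation followed by clearing denominators makes all coordinates integral. Producing the coordinates is a computer task, but once they are written down the check can be done by hand.

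\smallskip
\noindent\emph{No diagram based on $F_{12}$.} Here I would argue by contradiction via oriented matroids, now in rank $4$ rather than the rank $5$ of Section~\ref{sec:non-polytopal}. A diagram based on $F_{12}$ places the twelve vertices in $\R^3$ and hence produces a rank-$4$ chirotope $\chi$. Since $F_{12}=\{v_2,v_4,v_5,v_{11}\}$ is the base, the tetrahedron on these four vertices is non-degenerate, so we may normalize $\chi(v_2,v_4,v_5,v_{11})=+1$; moreover the remaining eight vertices all lie inside this tetrahedron, which fixes the sign of $\chi(v_i,v_a,v_b,v_c)$ for every $2$-face $\{v_a,v_b,v_c\}$ of $F_{12}$. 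For each of the cells $F_1,\dots,F_{11}$, realizability of that cell as a $3$-polytope with its prescribed combinatorics yields the dimension-$3$ analogue of the facet condition (P2): two vertices lying in a common cell are on the same side of the hyperplane spanned by any triangular $2$-face of that cell, while for two cells meeting in an interior $2$-face $R$, a vertex of one cell not on $R$ and a vertex of the other cell not on $R$ lie on opposite sides of the hyperplane of $R$. (Condition (P1) has no instances in this setting, since all $2$-faces of $W_{12}^{40}$ are triangles.) Starting from the normalization, I would propagate forced sign values of $\chi$ through these cell conditions and the three-term Grassmann--Pl\"ucker relations, in the same fashion as the chain of implications in Section~\ref{sec:non-polytopal}, until two Grassmann--Pl\"ucker relations become jointly unsatisfiable by the alternating property of $\chi$. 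As in Section~\ref{sec:non-polytopal}, this closed chain is found by computer but can then be verified by hand.

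\smallskip
The step I expect to be the main obstacle is getting the dimension-$3$ transcription of the diagram conditions exactly right and then pinning down a short, hand-checkable contradiction among the resulting finitely many sign constraints: one must track carefully which cells ($F_1,\dots,F_{11}$, but not the removed $F_{12}$) contribute interior-$2$-face conditions, and the inside/outside orientation of the base tetrahedron and of its four $2$-faces, since a single misplaced sign would either destroy the contradiction or manufacture a spurious one. It is worth noting, although not strictly needed for the proof, that $F_1,\dots,F_{11}$ are exactly the facets of $W_{12}^{40}$ containing one of its four simple vertices $v_3,v_6,v_7,v_9$, whereas $F_{12}$ contains none: near a simple vertex the complex looks like the corner of a simplex, which is why the explicit construction of the other eleven diagrams is easy, while for $F_{12}$ this shortcut is unavailable and, as the oriented matroid computation shows, no diagram exists at all.
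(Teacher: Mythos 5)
Your high-level framework is the same as the paper's: for $F_1,\dots,F_{11}$, produce explicit coordinates and verify them directly; for $F_{12}$, pass to rank-$4$ oriented matroids, use the diagram conditions (signs determined up to flips across interior ridges, with the sign of the base tetrahedron and its four $2$-faces fixed), and derive a contradiction. Your remarks about the simple vertices and about (P1) being vacuous in the $2$-simplicial setting are also correct. But there is a real gap at the crux of the $F_{12}$ half, namely in your expectation that the argument closes the same way Section~\ref{sec:non-polytopal} did — by propagating forced signs through a single chain of three-term Grassmann--Pl\"ucker relations until two of them clash. The paper's proof shows that no such short chain is available here. The diagram conditions plus Grassmann--Pl\"ucker propagation only determine $199$ to $328$ of the $\binom{12}{4}=495$ sign entries; the undetermined signs cannot be forced one by one, so a case split is unavoidable. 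The authors backtrack to extend the initial partial chirotope and end up with $6098$ partial chirotopes of size at least $435$, and then for \emph{each} of these they produce a \emph{biquadratic final polynomial} (in the sense of Bokowski--Richter) certifying non-realizability; the bfp certificate is strictly stronger than a clash of two Grassmann--Pl\"ucker triples, and the whole computation took about a week. So the proposal is missing both the large case analysis and the shift from ``propagate until a GP contradiction'' to ``search for a bfp in each branch.'' A sentence to the effect that, if no global GP contradiction is forced, one must fall back on a branch-and-bound search over completions with final polynomials as the pruning certificate, would repair the plan and bring it in line with what was actually done.
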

\begin{proof}
Again we use oriented matroids, but this time of rank $4$, as the diagrams of $W_{12}^{40}$ live in $\R^3$. Since $W_{12}^{40}$ is $2$-simplicial, we may assume general position of the vertices, and then the oriented matroids are uniform. Now, in order to find a diagram or to prove its non-existence, we construct a partial chirotope as follows:
	\begin{itemize}
	\item Take a ridge (triangle) and some other point on a facet that contains this triangle, and
		choose a sign ($+1$ or $-1$) for the basis.
	\item Every ridge defines a plane that separates the two facets containing it. Therefore, the sign
		of the chirotope does not change when we exchange points on the same side, and flips otherwise.
		The only exception to this are the triangles on the boundary, which are the facets of the convex hull
		of the diagram.
	\item Use the Grassmann--Pl\"ucker relations to determine further entries of the partial chirotope.
	\end{itemize}
The resulting partial chirotopes for the different facets as bases give already the signs of $199$ to $328$ elements out of $\binom{12}{4}=495$, which is the size of an oriented matroid for a diagram of $W_{12}^{40}$. Using an approach recently introduced by Firsching \cite{Firsching}, we used SCIP \cite{SCIP} to find coordinates for the diagrams with bases $F_1,\ldots,F_{11}$, while in the case of the diagram with base $F_{12}$ we used backtracking to find for every oriented matroid a partial chirotope of size at least $435$ (i.e.~of roughly $87.5\%$). For all of these partial chirotopes we checked the existence of a \emph{biquadratic final polynomial} (bfp) (see \cite[Ch.~VII]{BokowskiStu} for an introduction and \cite{BoRi} for the algorithm). To find these bfps we used an implementation of the algorithm by Moritz Firsching and Arnaul Padrol. The bfps for the partial chirotopes will also be bfps for all oriented matroids that would complete this partial chirotope, so in finding bfps at this stage we could decrease the size of the search tree by a factor of $2^{60}$. There were $6098$ such partial chirotopes. To prove existence of the bfps for all these partial chirotopes, we needed roughly one week of computing time on a usual Linux workstation.
\end{proof}

\section{Embeddability}\label{sec:embeddability}
At this point we know that the sphere $W_{12}^{40}$ is non-polytopal, but is it \emph{fan-like}, \emph{embeddable}, or even \emph{star-shaped}? These properties of spheres are treated in detail by Ewald \cite[Sec.~III.5]{Ewal}. A polyhedral $d$-sphere $S$ is \emph{fan-like} if there exists a complete fan $\Sigma\subset\R^{d+1}$ together with an isomorphism that maps a face $F\in S$ to a cone $\pos F \in \Sigma$. The polyhedral sphere $S$ is \emph{embeddable} if there is a continuous injection $\varphi:|S|\rightarrow\R^n$ for some $n$ that has a continuous inverse on $\varphi(|S|)$, and which maps faces of $S$ to convex polytopes. A polyhedral $d$-sphere is \emph{star-shaped} if it has an embedding into $\R^{d+1}$ that defines a fan. Clearly it follows that a star-shaped sphere is both fan-like and embeddable. See \cite[Sec.~III.5]{Ewal} for examples that show that there are no other implications between these three properties. However, if $S$ is a simplicial sphere, then fan-like and star-shaped are equivalent, and Kalai \cite{Ka1} showed that for $d\geq 2$ every embeddable simple polyhedral $d$-sphere is polytopal.

The sphere $W_{12}^{40}$ is fan-like with the following vertex coordinates (resp. rays through the following points):
\begin{center}
\begin{tabular}{l r r r r c l r r r r}
$v_{\hz0} = ($& $9,$& $2,$& $4,$& $-1)$ && $v_{\hz6} = ($& $-5,$ & $-9,$ & $2,$ & $-16)$\\
$v_{\hz1} = ($& $-3,$& $14,$& $-16,$& $6)$ && $v_{\hz7} = ($& $-1,$ & $2,$ & $14,$ & $-1)$\\
$v_{\hz2} = ($& $-8,$& $-2,$& $-4,$& $-16)$ && $v_{\hz8} = ($& $-1,$ & $2,$ & $4,$ & $9)$\\
$v_{\hz3} = ($& $1,$& $23,$& $-1,$& $-9)$ && $v_{\hz9} = ($& $9,$ & $-7,$ & $-12,$ & $21)$\\
$v_{\hz4} = ($& $-1,$& $12,$& $4,$& $-1)$ && $v_{10} = ($& $-6,$ & $-13,$ & $-5,$ & $-1)$\\
$v_{\hz5} = ($& $-14,$& $-11,$& $27,$& $-11)$ && $v_{11} = ($& $-14,$ & $4,$ & $1,$ & $-3)$
\end{tabular}
\end{center}
These coordinates were found with the similar methods as we used for finding coordinates for the diagrams: From the combinatorics we constructed a partial chirotope of rank $5$, and then we used SCIP to find coordinates satisfying this.

However, with the methods from the previous section we were not able to decide whether $W_{12}^{40}$ is embeddable or star-shaped. For embeddability the oriented matroid approach will not work, since there are infinitely many dimensions to check. In the star-shaped case we did not find coordinates, but on the other hand we found an oriented matroid that does not have a bfp.
\begin{prop}
The sphere $W_{12}^{40}$ is not embeddable, and hence it is not star-shaped.
\end{prop}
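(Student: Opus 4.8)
The plan is to reduce to Kalai's theorem on simple spheres (cited above) by truncating the non-simple vertices. Assume, for contradiction, that $W:=W_{12}^{40}$ is embeddable, via a continuous injection $\varphi\colon|W|\to\R^{n}$ that maps each face to a convex polytope. Since $W$ is $2$-simple but not simple — several of its vertices, for instance $v_{0}$, lie in more than four facets — I would pass to the $3$-sphere $\widetilde W$ obtained by truncating every non-simple vertex $v$ of $W$. Combinatorially, truncating $v$ deletes $v$, inserts a new facet $\widehat F_{v}$ whose boundary complex is the vertex figure $W/v$ (a $2$-sphere, hence polytopal by Steinitz), and replaces each edge of $W$ through $v$ by an edge between new vertices placed on it. Because $W$ is $2$-simple, each such edge lies in exactly three facets, so each new vertex lies in exactly $3+1=4$ facets; carrying out the truncations one vertex at a time, one checks that $\widetilde W$ is a \emph{simple} $3$-sphere.

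Next I would show that $\widetilde W$ inherits embeddability from $W$. Near $\varphi(v)$ the image $\varphi(|W|)$ is the union of the vertex cones at $\varphi(v)$ of the convex polytopes $\varphi(C)$ with $C\ni v$, i.e.\ a cone over an embedded polyhedral $2$-sphere with convex facets combinatorially isomorphic to $W/v$. One shaves off a small such neighbourhood of $\varphi(v)$ — choosing, for each cell $C\ni v$, a cutting hyperplane so that the resulting cut facets are the $2$-faces of a single small convex $3$-polytope $Q_{v}$ realizing $W/v$ — and glues in $Q_{v}$ in its place. Performed with sufficiently small corners at all non-simple vertices, this yields a continuous injection of $|\widetilde W|$ into $\R^{n}$ with convex faces, so $\widetilde W$ is embeddable.

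Now Kalai's theorem applies: $\widetilde W$ is a simple embeddable polyhedral $3$-sphere, hence it is polytopal, say realized by a simple $4$-polytope $P'$. The facets of $P'$ other than the $\widehat F_{v}$ are in bijection with the cells of $W$; writing $H_{C}$ for the supporting hyperplane of the facet corresponding to the cell $C$ and $H_{C}^{-}$ for the closed halfspace containing $P'$, one wants $P:=\bigcap_{C}H_{C}^{-}$ to be a bounded $4$-polytope with boundary complex $W$ — contradicting the non-polytopality established in Theorem~\ref{thm:main}. This contradiction would show that $W$ is not embeddable, and since a star-shaped sphere is in particular embeddable, it would follow at once that $W$ is not star-shaped.

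The main obstacle is the last step: for an \emph{arbitrary} realization $P'$ of $\widetilde W$ the bundle of hyperplanes $\{H_{C}:C\ni v\}$ need not be concurrent, so the ``corner'' at $v$ need not close up to a single vertex, and $P$ need not have boundary complex $W$. I see two ways around this. One is to track enough of the geometry of the truncated embedding of $\widetilde W$ through the proof of Kalai's theorem to guarantee that a realization $P'$ with the required concurrences can be chosen. The more robust route is to bypass un-truncation entirely: since $\widetilde W$ is simple, its dual $\widetilde W^{*}$ is a simplicial $3$-sphere on $12+8=20$ vertices, obtained from $W\cong W^{*}$ by stellarly subdividing each of its eight non-simplicial facets, and it suffices to verify — by a finite oriented-matroid computation of rank $5$ of exactly the kind carried out in Section~\ref{sec:non-polytopal} — that $\widetilde W^{*}$ admits no oriented matroid; then $\widetilde W^{*}$, and hence $\widetilde W$, is non-polytopal, which contradicts Kalai's theorem and finishes the proof.
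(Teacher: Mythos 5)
Your approach is genuinely different from the paper's, and you are honest about the gap, which is exactly where the proof breaks down. The paper does not pass through Kalai's theorem at all; it gives a direct two-step geometric argument tailored to $W_{12}^{40}$: (i) because $v_3$ is a simple vertex and the four facets $F_1,F_2,F_3,F_8$ that contain it together cover all twelve vertices, any embedding is forced into a $4$-dimensional affine subspace; (ii) the same local picture at the simple vertices $v_3,v_6,v_7,v_9$ shows that eleven of the twelve facets are facets of the convex hull of the embedded complex, and a separate neighborhood argument handles $F_{12}$. Thus an embedding would produce a convex $4$-polytope with boundary $W_{12}^{40}$, contradicting Theorem~\ref{thm:main}. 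This argument is short and needs nothing but the facet list.

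Your route has two genuine gaps. First, the ``shaving off'' step that derives embeddability of $\widetilde W$ from embeddability of $W$ is not established. For a non-simple vertex $v$ in an embedding into $\R^n$ with $n>4$, the union of the cones at $\varphi(v)$ of the facets through $v$ need not lie in any $4$-dimensional affine subspace, and the cut cross-sections of the facets through $v$ need not fit together as the boundary of a convex $3$-polytope $Q_v$ living in a common $3$-plane; nothing in the definition of ``embeddable'' gives you the local convex-fan structure that the truncation argument silently presupposes. (The paper needs a global version of exactly this $4$-flatness, and obtains it only by the ad hoc observation that $F_1\cup F_2\cup F_3\cup F_8$ already contains every vertex.) Second, the un-truncation step: you name the obstruction — the hyperplanes $H_C$ for $C\ni v$ need not be concurrent — but neither proposed repair closes it. Repair (a) is not an argument. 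Repair (b) rests on a computation you have not done, and moreover its outcome is far from a foregone conclusion: stellarly subdividing the non-simplicial facets of $W^*$ adds eight free vertices, and adding vertices typically makes non-realizability \emph{easier} to escape, not harder; there is no reason to expect $\widetilde W^*$ to admit no oriented matroid just because $W^*$ does not. If $\widetilde W^*$ turned out to be polytopal after all, your entire chain of implications would collapse without saying anything about $W$. So as written the proof does not establish the proposition, and the route through Kalai's theorem, while conceptually appealing, is considerably harder to make rigorous than the paper's direct argument.
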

\begin{proof}
This proof heavily depends on the structure of $W_{12}^{40}$, and establishes the following:
\begin{enumerate}
\item If $W_{12}^{40}$ is embeddable into some $\R^k$, $k\geq 4$, then it is also embeddable into $\R^4$.
\item If $W_{12}^{40}$ is embedded into $\R^4$, then this is an embedding as convex polytope.
\end{enumerate}
Let us assume that $W_{12}^{40}$ is embedded in some $\R^k$,
that is, it is realized by a polytopal complex $\Gamma$ 
(where each of the facets is realized as a convex $3$-polytope).

For (i) we note that $v_3$ is a simple vertex, that is, a vertex
that is contained in only $4$ facets, or equivalently, in only $4$ edges.
The $4$ facets are each contained in the affine span of $3$ of these
edges, so all $4$ facets are contained in the affine span of the $4$
edges. Moreover, each of the $12$ vertices of the sphere
are contained in at least one of the $4$ facets, which are $F_1,F_2,F_3$ and $F_8$.
So the complete embedded sphere is contained in this affine $4$-dimensional
subspace of $\R^k$.

For (ii) note that the argument for (i) implies that in addition
each of the facets $F_1,F_2,F_3$ and $F_8$
is also a facet of the convex hull of $\Gamma$, that is, all of $\Gamma$
is contained in a closed halfspace of the hyperplane spanned by the facet,
while only the vertices of the facet lie on the boundary hyperplane.
Exactly the same arguments are valid for the other three simple vertices of $W_{12}^{40}$
and the facets they are contained in, that is,
for $v_6$ and the facets $F_2,F_4,F_6$ and $F_9$,
for $v_7$ and the facets $F_2,F_5,F_7$ and $F_{11}$, as well as
for $v_9$ and the facets $F_3,F_4,F_7$ and $F_{10}$.
Thus we have established for all facets $F_i$ of $\Gamma$, except for
the tetrahedron $F_{12}$, that 
it is also a facet of the convex hull of $\Gamma$. As mentioned above, $F_{12}$ is special in the way that it is the only facet that does not contain a simple vertex.

To see that also $F_{12}$ is a facet of the convex hull of $\Gamma$, consider its neighbours $F_2$, $F_8$, $F_9$, and $F_{11}$. Since these are all facets of the convex hull of $\Gamma$, they cannot lie in the same hyperplane as $F_{12}$. The facet $F_2$ gives that the vertices $v_0,v_3,v_6$, and $v_7$ lie on the same side of the hyperplane $H_{12}$ spanned by $F_{12}$. Now, the facet $F_8$ contains $v_3$, the facet $F_9$ contains $v_6$, and the facet $F_{11}$ contains $v_7$, whence the points from these facets not in $F_{12}$ are also all on the same side of $H_{12}$. Therefore, we have that all points, except possibly $v_9$, lie in, say, $H_{12}^+$. Since $F_{10}$ is a bipyramid over the triangle $v_1,v_8,v_{10}$ with apexes $v_9$ and $v_{11}$, and since $v_{11}$ is in $F_{12}$, it follows $v_9\in H_{12}^+$. This completes the proof of (ii).
\end{proof}

\section*{Acknowledgements}
We are very grateful to Marge Bayer, Lou Billera, David Bremner, Moritz Firsching, Hiroyuki Miyata, Arnau Padrol, and Axel Werner for very
valuable comments, discussions, references, and data.

\begin{small}
		\newcommand\bysame{\leavevmode\vrule height 2pt depth -1.6pt width 23pt}
         
\end{small}
\end{document}